\newtheorem{theorem}{Theorem}[section]
\newtheorem{lemma}[theorem]{Lemma}
\begin{document}


%
%

\title[Moments of the Riemann zeta-function]
{Moments of the Riemann zeta-function at its \\ relative extrema on the critical line}

\author{Micah B. Milinovich}

\address{Department of Mathematics \\ 329 Hume Hall \\ University of Mississippi \\ University, MS 38677 USA} 
\email{mbmilino@olemiss.edu}




\maketitle


\begin{abstract}
Assuming the Riemann hypothesis, we obtain upper and lower bounds for moments of the Riemann zeta-function averaged over the extreme values between its zeros on the critical line. Our bounds are very nearly the same order of magnitude. The proof requires upper and lower bounds for continuous moments of derivatives of Riemann zeta-function on the critical line. 
\end{abstract}


\section{Introduction}

Let $\zeta(s)$ denote the Riemann zeta-function. This article is concerned with estimating discrete moments of the form
\begin{equation*}
\mathcal{M}_k(T)=\frac{1}{N(T)} \sum_{0<\gamma \leq T} \max_{\gamma\leq \tau_\gamma \leq \gamma^+} \big| \zeta(\tfrac{1}{2}\!+\!i \tau_\gamma) \big|^{2k}
\end{equation*}
where $k$ is a natural number and $\gamma \leq \gamma^+$ are successive ordinates of non-trivial zeros of $\zeta(s)$. As usual, the function
\begin{equation}\label{N(T)}
N(T) \ \! := \sum_{0<\gamma\leq T} \!1 \ \! =  \ \! \frac{T}{2\pi} \log  \frac{T}{2\pi} -  \frac{T}{2\pi} + O\big(\log T \big)
\end{equation}
denotes the number of zeros $\rho= \beta+ i \gamma$ of $\zeta(s)$ up to height $T$, counted with multiplicity.  

There have been a number of results concerning behavior of the moments $\mathcal{M}_k(T)$, and other closely related sums, for small values of $k$. See, for instance, \cite{C,CG,H3,H1,H4,L,SS,S,Y}. In this article, we prove the following theorem which provides upper and lower bounds for $\mathcal{M}_k(T)$, for every positive integer $k$, which are very nearly the same order of magnitude. 

\begin{theorem}\label{th}
 Assume the Riemann hypothesis. Let $k$ be a natural number and let $\varepsilon>0$ be arbitrary. Then, letting $\gamma$ and $\gamma^+$ denote consecutive ordinates of non-trivial zeros of $\zeta(s)$, we have
\begin{equation*} 
 (\log T)^{k^2-\varepsilon} \ll \ \! \frac{1}{N(T)} \sum_{0<\gamma \leq T} \max_{\gamma\leq \tau_\gamma \leq \gamma^+} \big| \zeta(\tfrac{1}{2}\!+\!i \tau_\gamma) \big|^{2k}\ \! \ll (\log T)^{k^2+\varepsilon}
\end{equation*}
for sufficiently large $T$ where the implied constants depend on $k$ and $\varepsilon.$
\end{theorem}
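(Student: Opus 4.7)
My plan for the upper bound rests on the observation that, since $\zeta$ vanishes at $\tfrac{1}{2}+i\gamma$, the fundamental theorem of calculus gives
\[
|\zeta(\tfrac{1}{2}+i\tau_\gamma)|^{2k} \;\leq\; 2k\int_\gamma^{\gamma^+} |\zeta(\tfrac{1}{2}+it)|^{2k-1}\,|\zeta'(\tfrac{1}{2}+it)|\,dt
\]
for every $\tau_\gamma \in [\gamma, \gamma^+]$; in particular this bounds the maximum. Summing over $0<\gamma \leq T$, the intervals on the right concatenate, so the discrete sum is at most $2k\int_0^{T+1}|\zeta|^{2k-1}|\zeta'|\,dt$, which by Cauchy--Schwarz is at most $2k\big(\int|\zeta|^{2k}\big)^{1/2}\big(\int|\zeta|^{2k-2}|\zeta'|^2\big)^{1/2}$. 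Under RH, Soundararajan's method gives $\int_0^T |\zeta|^{2k}\,dt \ll T(\log T)^{k^2+\varepsilon}$ and similarly $\int_0^T |\zeta'|^{2k}\,dt \ll T(\log T)^{k^2+2k+\varepsilon}$; one application of H\"older yields $\int |\zeta|^{2k-2}|\zeta'|^2 \ll T(\log T)^{k^2+2+\varepsilon}$. Combining these, the sum is $\ll T(\log T)^{k^2+1+\varepsilon}$, and division by $N(T) \asymp T\log T$ gives the claimed upper bound.

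For the lower bound I plan to compare the maximum on each interval $[\gamma,\gamma^+]$ with a pointwise value of $|\zeta|$ near its midpoint. Taylor expansion of $\zeta$ around $\rho = \tfrac{1}{2}+i\gamma$ gives
\[
\zeta(\tfrac{1}{2}+i(\gamma+h)) \;=\; ih\,\zeta'(\rho) \;+\; O\!\Big(h^2\max_{|s-\rho|\leq h}|\zeta''(s)|\Big),
\]
and, away from a sparse set of \emph{atypical} zeros---those with exceptionally small gaps $\delta_\gamma = \gamma^+-\gamma$, or with $|\zeta''|$ locally much larger than $|\zeta'(\rho)|\log T$---the leading term dominates throughout $[\gamma,\gamma^+]$. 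Taking $h$ of size $\delta_\gamma/2$ then yields $\max_{[\gamma,\gamma^+]}|\zeta|^{2k} \gg \delta_\gamma^{2k}|\zeta'(\rho)|^{2k}$, so that, restricted to \emph{generic} zeros for which $\delta_\gamma \gg 1/\log T$, the moment dominates $(\log T)^{-2k}\sum_{\gamma\,\text{generic}}|\zeta'(\rho)|^{2k}$. Invoking the RH lower bound $\sum_{0<\gamma\leq T}|\zeta'(\tfrac{1}{2} + i\gamma)|^{2k} \gg N(T)(\log T)^{k^2+2k-\varepsilon}$ then produces the required $(\log T)^{k^2-\varepsilon}$.

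The main obstacle is showing that the atypical zeros carry only a negligible proportion of the mass of $\sum_\gamma |\zeta'(\rho)|^{2k}$. I expect to handle this via a contour-integral argument, expressing the discrete sum over such ``bad'' zeros as an integral involving $\zeta'/\zeta$ against a test function that isolates small gaps or large $|\zeta''|$, and then bounding the resulting integral using sharp continuous moment estimates for $\zeta$, $\zeta'$, and $\zeta''$ on the critical line. This is precisely why---as the abstract forecasts---both upper and lower bounds for continuous moments of the zeta-function and its derivatives are required under RH: the lower bounds produce the main term over the generic zeros, while the upper bounds dispose of the atypical contribution.
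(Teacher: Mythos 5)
Your upper-bound argument is essentially the paper's in a slightly different dress. Where you integrate $\frac{d}{dt}|\zeta(\tfrac12+it)|^{2k}$ from $\gamma$ to $\tau_\gamma$ and bound, the paper works with $Z(t)$, uses Rolle's theorem plus the \emph{exact} identity $\int_\gamma^{\gamma^+}|Z'Z^{2k-1}|\,dt = \tfrac1k Z(\lambda)^{2k}$ at the unique critical point $\lambda$, and then applies H\"older once with exponents $\tfrac{2k-1}{2k},\tfrac1{2k}$ instead of your Cauchy--Schwarz followed by H\"older; either way one lands on $T(\log T)^{k^2+1+\varepsilon}$. One small caution: the bound $\int_0^T|\zeta'|^{2k}\,dt\ll T(\log T)^{k^2+2k+\varepsilon}$ is not literally Soundararajan's theorem --- it has to be extracted from his bound for $|\zeta|^{2k}$ by moving slightly off the line (via Heath-Brown's convexity estimate for $J_k(\sigma)$) and then applying Cauchy's integral formula, which is exactly what the paper's Theorem~\ref{th2} and Lemmas~\ref{L2}--\ref{cauchy} do. You should spell this out rather than absorb it into ``Soundararajan's method.''

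The lower bound, however, has a genuine gap, and it is exactly where you flag the ``main obstacle.'' Your plan is to extract $\max_{[\gamma,\gamma^+]}|\zeta|^{2k}\gg \delta_\gamma^{2k}|\zeta'(\rho)|^{2k}$ by Taylor expansion, restrict to generic $\gamma$ with $\delta_\gamma\gg 1/\log T$ and tame $\zeta''$, and then appeal to a discrete lower bound $\sum_\gamma|\zeta'(\rho)|^{2k}\gg N(T)(\log T)^{k^2+2k-\varepsilon}$. But to conclude you must show that the \emph{atypical} zeros carry a negligible share of $\sum_\gamma|\zeta'(\rho)|^{2k}$. Any attempt to do this via H\"older --- $\sum_{\text{bad}}|\zeta'(\rho)|^{2k}\le(\#\text{bad})^{1/p}(\sum_\gamma|\zeta'(\rho)|^{2kq})^{1/q}$ --- requires an \emph{upper} bound for higher discrete moments $\sum_\gamma|\zeta'(\rho)|^{2kq}$, which is a problem at least as hard as the one you are solving and is nowhere near what the cited continuous moment bounds give; nor does a contour-integral expression for the bad-zero sum obviously avoid this circularity. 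Controlling the error $h^2\max|\zeta''|$ against $h|\zeta'(\rho)|$ uniformly over $[\gamma,\gamma^+]$ is a further pointwise condition you have no mechanism to enforce. The paper sidesteps all of this by never localizing at individual zeros: it proves a Conrey--Ghosh-style asymptotic lower bound $\int_0^T Z'(t)^2Z(t)^{2k-2}\,dt\gg T(\log T)^{k^2+2}$ via a mollified Dirichlet-polynomial Cauchy--Schwarz (Theorem~\ref{cg}), then runs H\"older ``backwards'' --- bounding $\int Z'^2Z^{2k-2}$ above in terms of $\bigl(\int|Z'Z^{2k-1}|\bigr)^{2/3}$ times controlled moment factors --- to force $\int|Z'Z^{2k-1}|\gg T(\log T)^{k^2+1-\varepsilon}$. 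This gives the lower bound with only the self-contained Theorems~\ref{cg} and~\ref{th2}, whereas your route both imports the deep external input $\sum_\gamma|\zeta'(\rho)|^{2k}\gg N(T)(\log T)^{k^2+2k-\varepsilon}$ and leaves the exceptional-set removal as an unresolved, and genuinely hard, step.
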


With considerably more work, it might be possible to remove the $\varepsilon$ appearing in the exponent of $\log T$ in the lower bound for $\mathcal{M}_k(T)$ using a method of Rudnick and Soundarajan \cite{RS,RS2}. This would be similar to the situation in \cite{MN}, where lower bounds moments of the derivative of the Riemann zeta-function averaged over the non-trivial zeros of $\zeta(s)$ are given. 

Assuming the Riemann hypothesis (RH), Conrey and Ghosh \cite{CG} have shown that
\begin{equation} \label{2ndmoment}
 \mathcal{M}_1(T) \ \! = \ \! \frac{e^2\!-\!5}{2} \log T + O(1).
 \end{equation}
Also, assuming RH, Conrey \cite{C} established the inequalities
$$ \frac{\sqrt{21}}{45\pi} (\log T)^4 \ \! \lesssim \  \! \mathcal{M}_2(T) \! \ \lesssim \ \! \frac{1}{ \pi \sqrt{15}} (\log T)^4.$$
Here, we have used the notation $A\lesssim B$ to indicate that $A \leq \big(1+o(1)\big) B$.
Slightly stronger upper bounds for $\mathcal{M}_2(T)$ have been given by Hall \cite{H1,H4}.

It seems reasonable to conjecture that, for each natural number $k$, 
\begin{equation}\label{conj}
 \mathcal{M}_k(T) \ \!  \asymp \ \! (\log T)^{k^2} \quad \text{ as } \ T\rightarrow \infty.
 \end{equation}
 Theorem \ref{th} certainly lends strong support for this conjecture. In fact, the estimates in (\ref{conj}) could be established if one knew enough about the behavior of continuous moment of $\zeta(s)$ and its derivative on the critical line (see the remark at the end of \textsection 3). It may actually be the case that
$ \mathcal{M}_k(T) \ \! \sim \ \! A_k (\log T)^{k^2},$ when $k\in\mathbb{N}$. However, there does not seem to be a reasonable conjecture for the values of $A_k$ when $k\geq 2.$

\bigskip


\section{Initial Manipulations}

The functional equation for the Riemann zeta-function, in asymmetric form, is 
\begin{equation}\label{fe}
 \zeta(s) = \chi(s) \zeta(1\!-\!s)  \quad \text{where} \quad \chi(s) = 2^s \pi^{s-1} \sin\big(\tfrac{\pi}{2} s\big) \Gamma(1\!-\!s).
 \end{equation}
Note that $|\chi(\tfrac{1}{2}\!+\!it)|=1$ for real $t$ and that $\chi(s)=\chi(1\!-\!s)^{-1}$. Setting
\begin{equation}\label{DefZ}
 Z(t) = \chi(\tfrac{1}{2}\!-\!it)^{\frac{1}{2}} \zeta(\tfrac{1}{2}\!+\!it), 
 \end{equation}
it follows from the functional equation that $Z(t)$ is real for real $t$, that $|Z(t)|=|\zeta(\tfrac{1}{2}\!+\!it)|$, and that $Z$ changes sign when $t$ is an ordinate of a zero of $\zeta(s)$ on the critical line of odd multiplicity. 

If $\gamma$ and $\gamma^+$ denote consecutive ordinates of non-trivial zeros of $\zeta(s)$ on the critical line, then $Z(\gamma)=Z(\gamma^+)=0$ and by Rolle's Theorem there is a zero $\lambda$ of $Z'(t)$ satisfying $\gamma\leq \lambda \leq \gamma^+$.  In fact, on RH, it is known that there is exactly one zero $\lambda$ of $Z'(t)$ satisfying $\gamma\leq \lambda \leq \gamma^+$ when $\gamma>0$; see Theorem 2 of Hall \cite{H2}. Therefore, assuming RH, we see that
$$ \max_{\gamma\leq \tau_\gamma \leq \gamma^+} \big| \zeta(\tfrac{1}{2}\!+\!i\tau_\gamma)\big|^{2k} = Z(\lambda)^{2k}$$
for each $\gamma >0$ and each natural number $k$. Since there are exactly two zeros of $Z'(t)$ between $t=0$ and the smallest positive zero of $Z(t)$, assuming RH it follows that

\begin{equation}\label{2:1}
 \sum_{0<\gamma \leq T} \max_{\gamma\leq \tau_\gamma \leq \gamma^+} \big| \zeta(\tfrac{1}{2}\!+\!i \tau_\gamma) \big|^{2k} = \sum_{\substack{0<\lambda \leq T \\ Z'\!(\lambda)=0}} Z(\lambda)^{2k} + O_k(1) 
 \end{equation}
for each $k\in\mathbb{N}$. 

The sum over the zeros of $Z'(t)$ appearing on the right-hand side of the above equation can be replaced by an integral involving $Z(t)$ and $Z'(t)$ plus a small error term. Assuming RH, let $\lambda$ denote the unique zero of $Z'(t)$ between two consecutive zeros $\gamma$ and $\gamma^+$ of $Z(t)$. Then
\begin{equation}\label{2:2}
\begin{split}
 \int_\gamma^{\gamma^+} \! \big| Z'(t) Z(t)^{2k-1} \big| \ \! dt \ &= \ \! \bigg| \int_\gamma^{\lambda} \! Z'(t) Z(t)^{2k-1} \ \! dt  -\!  \int_{\lambda}^{\gamma^+} \! Z'(t) Z(t)^{2k-1} \ \! dt  \ \! \bigg| = \ \frac{1}{k} \ \! Z(\lambda)^{2k}
\end{split}
\end{equation}
for every natural number $k$. Thus,  it follows from (\ref{2:1}), (\ref{2:2}), and the Lindel\"{o}f hypothesis (which is a consequence of RH) that\begin{equation}\label{2:3}
  \sum_{0<\gamma \leq T} \max_{\gamma\leq \tau_\gamma \leq \gamma^+} \big| \zeta(\tfrac{1}{2}\!+\!i \tau_\gamma) \big|^{2k} \ \! = \ \! k \int_0^{T} \! \big| Z'(t) Z(t)^{2k-1} \big| \ \! dt  + O_{k,\varepsilon}\big(T^\varepsilon \big) 
  \end{equation}
for each $k\in\mathbb{N}$ and every $\varepsilon>0$.

\bigskip


\section{Auxillary Theorems and the Proof of Theorem \ref{th}}

In order to prove Theorem \ref{th}, we require an upper and a lower bound for the integral appearing on the right-hand side of (\ref{2:3}). These bounds will be deduced as a consequence of the following two theorems. Our proof only requires the estimates in Theorem \ref{th2} in the cases where $\ell$ is either $0$ or $1$.

\begin{theorem}\label{cg}
 Assume the Riemann hypothesis. Let $T$ be large and let $k$ be a natural number. Then
\begin{equation}\label{cgbound}
 \int_0^T Z'(t)^2 Z(t)^{2k-2} \ \! dt \ \! \gg \ \! T(\log T)^{k^2+2}
 \end{equation}
where the implied constant depends on $k$.
\end{theorem}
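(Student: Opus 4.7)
The plan is to derive a pointwise identity that expresses $Z'(t)^2$ in terms of $|\zeta(\tfrac{1}{2}+it)|^2$ and $|\zeta'(\tfrac{1}{2}+it)|^2$, and then reduce the claim to the continuous moment bounds of Theorem~\ref{th2}. Differentiating the defining relation \eqref{DefZ} and setting $\alpha(t) := -\tfrac{1}{2}\chi'(\tfrac{1}{2}-it)/\chi(\tfrac{1}{2}-it)$, one obtains
$$Z'(t) = i\,\chi(\tfrac{1}{2}-it)^{1/2}\bigl(\alpha(t)\,\zeta(\tfrac{1}{2}+it) + \zeta'(\tfrac{1}{2}+it)\bigr).$$
The functional equation $\chi(s)\chi(1-s)=1$ together with the real-analyticity of $\chi$ force $\alpha(t)$ to be real-valued, and Stirling's formula applied to \eqref{fe} gives $\alpha(t) = \tfrac{1}{2}\log(t/2\pi) + O(1/t)$. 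Since $|\chi(\tfrac{1}{2}-it)|=1$, $Z'(t)$ is real, and the same functional equation forces $\mathrm{Re}(\zeta'(\tfrac{1}{2}+it)/\zeta(\tfrac{1}{2}+it)) = -\alpha(t)$, the cross term in $|\alpha\zeta+\zeta'|^2$ cancels and leaves the pointwise identity
$$Z'(t)^2 \ = \ \bigl|\zeta'(\tfrac{1}{2}+it)\bigr|^2 - \alpha(t)^2\,\bigl|\zeta(\tfrac{1}{2}+it)\bigr|^2.$$

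Multiplying by $Z(t)^{2k-2}=|\zeta(\tfrac{1}{2}+it)|^{2k-2}$ and integrating from $0$ to $T$ converts \eqref{cgbound} into the task of bounding
$$\int_0^T \bigl|\zeta'(\tfrac{1}{2}+it)\bigr|^2 \bigl|\zeta(\tfrac{1}{2}+it)\bigr|^{2k-2} dt \ - \ \int_0^T \alpha(t)^2 \bigl|\zeta(\tfrac{1}{2}+it)\bigr|^{2k}\, dt$$
from below by a multiple of $T(\log T)^{k^2+2}$. For the first integral I would apply Theorem~\ref{th2} with $\ell=1$, which yields a lower bound of order $T(\log T)^{k^2+2}$ with a specific leading constant. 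For the second I would use partial summation combined with Theorem~\ref{th2} at $\ell=0$ and the expansion $\alpha(t)^2 = \tfrac{1}{4}\log^2(t/2\pi) + O(\log t /t)$ to obtain an upper bound of the same order $T(\log T)^{k^2+2}$.

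The main obstacle will be to ensure that the subtraction leaves a quantity genuinely of order $T(\log T)^{k^2+2}$ rather than lower order: the leading constant in the lower bound for $\int_0^T|\zeta'|^2|\zeta|^{2k-2}\,dt$ must strictly exceed one-quarter of the leading constant in the upper bound for $\int_0^T|\zeta|^{2k}\,dt$. This strict inequality must hold for the true asymptotic coefficients, since the integrand $Z'(t)^2 Z(t)^{2k-2}$ is pointwise non-negative and nonzero on a set of positive measure; consequently the argument succeeds whenever Theorem~\ref{th2} is sharp enough to resolve these leading constants (in particular via matching asymptotic formulas, known in the classical case and anticipated throughout on RH).
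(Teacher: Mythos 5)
Your derivation of the pointwise identity $Z'(t)^2 = |\zeta'(\tfrac{1}{2}+it)|^2 - \alpha(t)^2|\zeta(\tfrac{1}{2}+it)|^2$ is correct, and it is equivalent to the observation used in the paper that $|Z'(t)| = |Z_1(\tfrac{1}{2}+it)|$ with $Z_1(s)=\zeta'(s)-\tfrac{1}{2}\frac{\chi'}{\chi}(s)\zeta(s)$. The genuine gap occurs immediately afterward: your plan to lower-bound $\int_0^T |\zeta'(\tfrac{1}{2}+it)|^2|\zeta(\tfrac{1}{2}+it)|^{2k-2}\,dt$ by ``applying Theorem~\ref{th2} with $\ell=1$'' cannot work, because Theorem~\ref{th2} concerns the moment $\int_0^T|\zeta^{(\ell)}(\tfrac{1}{2}+it)|^{2k}\,dt$ rather than a mixed moment, and more importantly it gives only \emph{upper} bounds (with an unavoidable $(\log T)^\varepsilon$ loss). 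There is no lower bound available in the paper's toolkit for any of these quantities; indeed, producing a lower bound for $\int_0^T|\zeta'|^2|\zeta|^{2k-2}\,dt$ of the right order with a resolved leading constant is essentially as hard as proving Theorem~\ref{cg} itself.

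The subtraction also cannot be salvaged under the hypotheses you invoke. Even granting matching upper and (hypothetical) lower bounds from Theorem~\ref{th2}, the $\varepsilon$ in the exponent swamps any comparison of leading constants, so the difference could a priori be anywhere between $0$ and $T(\log T)^{k^2+2+\varepsilon}$. Your final paragraph acknowledges this by appealing to asymptotic formulas that are ``anticipated on RH'' — but these asymptotics are precisely what is not known for $k\geq 3$, and assuming them would render Theorem~\ref{cg} trivial. The paper sidesteps the subtraction problem entirely: it treats $\int_0^T|Z_1(\tfrac{1}{2}+it)\zeta(\tfrac{1}{2}+it)^{k-1}|^2\,dt$ as a single non-negative quantity and lower-bounds it via Cauchy--Schwarz against a Dirichlet-polynomial mollifier $A(t)=\sum_{n\leq\xi}d_k(n)n^{-1/2-it}$, reducing the problem to explicit mean-value computations (contour shift to $\mathrm{Re}\,s = 1 + 1/\log T$ plus the Montgomery--Vaughan mean value theorem) where the leading constant can actually be extracted. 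You should adopt a resonance/mollifier lower-bound strategy of that kind rather than attempting to subtract two upper bounds.
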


\begin{theorem}\label{th2}
 Assume the Riemann hypothesis. Let $T$ be large, $k$ be a natural number, $\ell$ be a non-negative integer, and $\varepsilon>0$ be arbitrary. Then 
\begin{equation} \label{zeta}
\int_0^T \big|\zeta^{(\ell)}(\tfrac{1}{2}\!+\!it) \big|^{2k} dt \ \! \ll \ \! T (\log T)^{k(k+2\ell)+\varepsilon}
\end{equation}
and
\begin{equation} \label{Z}
 \int_0^T Z^{(\ell)}(t)^{2k} dt \ \! \ll \ \! T (\log T)^{k(k+2\ell)+\varepsilon}
\end{equation}
where the implied constants in $(\ref{zeta})$ and  $(\ref{Z})$ depend on $k, \ell,$ and $\varepsilon.$ 
\end{theorem}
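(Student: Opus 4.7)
The plan is to reduce both estimates to Soundararajan's conditional upper bound $\int_0^T|\zeta(\tfrac12+it)|^{2k}\,dt \ll T(\log T)^{k^2+\varepsilon}$, which is the $\ell=0$ case of (\ref{zeta}). For $\ell\geq 1$, I would apply Cauchy's integral formula with a circle of radius $r=1/\log T$ centered at $s_0=\tfrac12+it$:
\[ \zeta^{(\ell)}(\tfrac12+it) \;=\; \frac{\ell!}{2\pi r^\ell}\int_0^{2\pi} \zeta(\tfrac12+it+re^{i\theta})\,e^{-i\ell\theta}\,d\theta. \]
Raising to the $2k$-th power via Jensen's inequality, integrating over $t\in[0,T]$, and swapping the order of integration reduces (\ref{zeta}) to a uniform bound for the $2k$-th moment of $\zeta(\sigma+it)$ on $[0,T]$ for $|\sigma-\tfrac12|\leq 1/\log T$; the pre-factor $r^{-2k\ell}=(\log T)^{2k\ell}$ accounts for the shift in the exponent.

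Such a uniform extension of Soundararajan's bound to the thin strip $|\sigma-\tfrac12|\leq 1/\log T$ is essentially free. For $\sigma\geq \tfrac12$ his pointwise upper bound for $\log|\zeta(\sigma+it)|$ in terms of a short Dirichlet polynomial over prime powers applies verbatim, yielding the same exponent $k^2+\varepsilon$. For $\sigma=\tfrac12-\delta$ with $0<\delta\leq 1/\log T$, the functional equation (\ref{fe}) together with $|\chi(\sigma+it)|\asymp t^{1/2-\sigma}$ gives
\[ \int_0^T|\zeta(\tfrac12-\delta+it)|^{2k}\,dt \;\ll\; T^{2k\delta}\int_0^T|\zeta(\tfrac12+\delta+it)|^{2k}\,dt \;\ll\; T(\log T)^{k^2+\varepsilon}, \]
since $T^{2k\delta}\ll_k 1$ when $\delta\leq 1/\log T$. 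Combining these ingredients proves (\ref{zeta}).

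For (\ref{Z}), I would expand $Z^{(\ell)}(t)$ via Leibniz applied to the product $Z(t)=\chi(\tfrac12-it)^{1/2}\zeta(\tfrac12+it)$ from (\ref{DefZ}). Using Stirling's formula, one checks that $(d/dt)^j\chi(\tfrac12-it)^{1/2}\ll (\log t)^j$ for large $t$, whence
\[ |Z^{(\ell)}(t)|\;\ll\; \sum_{j=0}^{\ell} (\log t)^{\ell-j}\,|\zeta^{(j)}(\tfrac12+it)|. \]
Raising to the $2k$-th power via H\"older's inequality, integrating, and applying (\ref{zeta}) to each term, the $j$-th contribution is $\ll T(\log T)^{2k(\ell-j)+k(k+2j)+\varepsilon}=T(\log T)^{k^2+2k\ell+\varepsilon}$, which yields (\ref{Z}). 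The only real obstacle is verifying the uniform extension of Soundararajan's bound across the small circle of radius $1/\log T$, which amounts to tracking his Dirichlet-polynomial majorant under horizontal shifts of that size; everything else is routine bookkeeping with the functional equation and Stirling's formula.
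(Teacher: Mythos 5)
Your proposal follows essentially the same path as the paper: a Cauchy-integral-formula argument with a circle of radius $R=1/\log T$ to convert derivative moments into $\zeta$-moments on a thin vertical strip about $\Re s=\tfrac12$, the functional equation together with $|\chi(\sigma+it)|\asymp t^{1/2-\sigma}$ to pass from $\sigma<\tfrac12$ back to $\sigma>\tfrac12$, and a Leibniz-rule expansion of $Z^{(\ell)}$ with $(d/dt)^j\chi(\tfrac12-it)^{1/2}\ll(\log T)^j$ for the second assertion. The one place where you diverge is the step you yourself flag as ``the only real obstacle'': the uniform moment bound $\int_0^T|\zeta(\sigma+it)|^{2k}\,dt\ll T(\log T)^{k^2+\varepsilon}$ for $\tfrac12\le\sigma\le\tfrac12+1/\log T$. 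You propose to re-trace Soundararajan's pointwise majorant for $\log|\zeta(\sigma+it)|$ and argue that it ``applies verbatim'' off the line; the paper instead treats Soundararajan's critical-line result as a black box and propagates it rightward via Heath-Brown's convexity estimate for the smoothed moments, $J_k(\sigma)\ll T^{\sigma-1/2}J_k(\tfrac12)^{3/2-\sigma}+e^{-kT^2/4}$ (Lemma 5 of Heath-Brown, here Lemma~\ref{L2}), combined with a dyadic summation. The convexity route is cleaner: it avoids reopening the proof of Soundararajan's theorem and gives the uniform bound on the whole range $\tfrac12\le\sigma\le\tfrac34$ at once. Your plan is not wrong, but the assertion that Soundararajan's argument extends ``verbatim'' is exactly the unproved step that the paper replaces with a citation; if you pursue your version you would need to carry out that extension in detail, whereas the Heath-Brown lemma disposes of it immediately.
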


Theorem \ref{cg} is proved using standard Dirichlet polynomial methods.  The proof of Theorem \ref{th2} requires upper bounds for moments of $\zeta(s)$ off of the critical line. We deduce these from a recent result of Soundararajan \cite{S} and a convexity estimate for moments of the Riemann zeta-function due to Heath-Brown \cite{HB}. \\

\begin{proof}[Proof of Theorem \ref{th}] When $k=1$, the theorem follows from the estimate in  (\ref{2ndmoment}). Hence, we may assume that $k\geq 2$ so that $2k-4$ is a non-negative integer.  By H\"{o}lder's inequality and two applications of Theorem \ref{th2} (with $\ell=0$ and $\ell=1$), it follows that
\begin{equation}
\begin{split}
 \int_0^T \big| Z'(t) Z(t)^{2k-1}\big| \ \! dt \ \! &\leq \ \! \left\{ \int_0^T Z(t)^{2k} \ \! dt \right\}^{\frac{2k-1}{2k}} \!\! \cdot \left\{ \int_0^T Z'(t)^{2k} \ \! dt \right\}^{\frac{1}{2k}}
 \\
 & \ll_{_{k,\varepsilon}} T (\log T)^{k^2+1+\varepsilon}.
 \end{split}
 \end{equation}
Combining this estimate with (\ref{N(T)}) and (\ref{2:3}), establishes the upper bound in Theorem \ref{th}.

To establish the lower bound in Theorem \ref{th}, we observe that two more applications of H\"{o}lder's inequality and the bounds in Theorem \ref{th2} (when $\ell=0$ and $\ell=1$) imply that
\begin{equation*}
\begin{split}
 \int_0^T Z'(t)^2 Z(t)^{2k-2} \ \! dt  \ \!  &\leq  \ \!  \left\{ \int_0^T Z'(t)^4 Z(t)^{2k-4} \ \! dt \right\}^{\frac{1}{3}} \!\! \cdot \left\{ \int_0^T \big| Z'(t) Z(t)^{2k-1}\big| \ \! dt  \right\}^{\frac{2}{3}}
 \\
 &\leq  \ \!  \left\{ \int_0^T Z'(t)^{2k} \ \! dt \right\}^{\frac{2}{3k}} \!\! \cdot \left\{ \int_0^T Z(t)^{2k} \ \! dt \right\}^{\frac{k-2}{3k}} \!\! \cdot \left\{ \int_0^T \big| Z'(t) Z(t)^{2k-1}\big| \ \! dt  \right\}^{\frac{2}{3}}\!\!
 \\
 & \ll_{_{k,\varepsilon}} T^{\frac{1}{3}} (\log T)^{\frac{k^2+4}{3}+\frac{2 \varepsilon}{3} }  \cdot \left\{ \int_0^T \big| Z'(t) Z(t)^{2k-1}\big| \ \! dt  \right\}^{\frac{2}{3}}\!\!
\end{split}
\end{equation*}
when $k\geq 2$. Rearranging terms, it follows that
$$ \cdot  \int_0^T \big| Z'(t) Z(t)^{2k-1}\big| \ \! dt  \  \gg_{k,\varepsilon}  \ \frac{ \left\{\int_0^T Z'(t)^2 Z(t)^{2k-2} \ \! dt  \right\}^{\frac{3}{2}}}{T^{\frac{1}{2}} (\log T)^{\frac{k^2+4}{2}+\varepsilon }} . $$
Thus, by (\ref{2:3}) and Theorem \ref{cg},  we find that
\begin{equation*}
\begin{split}
 \sum_{0<\gamma \leq T} \max_{\gamma\leq \tau_\gamma \leq \gamma^+} \big| \zeta(\tfrac{1}{2}\!+\!i \tau_\gamma) \big|^{2k} \ \gg_{k,\varepsilon} \ \! T(\log T)^{k^2+1-\varepsilon}.
 \end{split}
\end{equation*}
Dividing both sides of the above inequality by $N(T)$ and using (\ref{N(T)}), we establish the lower bound in Theorem \ref{th}.
\end{proof}

\noindent{\sc Remarks.}  

\begin{enumerate}
\item In order to establish the estimate for the lower bound for $\mathcal{M}_k(T)$ in the above proof, one can use other exponents in H\"{o}lder's inequality in place of $\frac{2}{3k}, \frac{k-2}{3k},$ and $\frac{2}{3}$. For instance, we could have instead used $\frac{3}{4k}, \frac{2k-3}{4k},$ and $\frac{1}{2}$. Other choices of exponents are possible, as well.  \\

\item If it could be shown that 
\begin{equation*}
\int_0^T \big|\zeta (\tfrac{1}{2}\!+\!it) \big|^{2k} dt \ \! \ll \ \! T (\log T)^{k^2} \quad \text{and} \quad \int_0^T \big|\zeta'(\tfrac{1}{2}\!+\!it) \big|^{2k} dt \ \! \ll \ \! T (\log T)^{k(k+2)}
\end{equation*}
for each natural number $k$, then our proof of Theorem \ref{th} could be modified to show that, assuming RH, 
\begin{equation*}
\begin{split}
 \sum_{0<\gamma \leq T} \max_{\gamma\leq \tau_\gamma \leq \gamma^+} \big| \zeta(\tfrac{1}{2}\!+\!i \tau_\gamma) \big|^{2k}  \asymp \ \! T(\log T)^{k^2+1}.
 \end{split}
\end{equation*}
This would establish the correct order of magnitude for the moments $\mathcal{M}_k(T)$.
\end{enumerate}

\bigskip

\bigskip


\section{Lemmas}

In this section, we collect some estimates which we use to deduce Theorems \ref{cg} and \ref{th2}.

\begin{lemma} \label{L1}
 Assume the Riemann hypothesis. Let $T$ be large, $k>0$ be fixed, and $\varepsilon>0$ be arbitrary. Then
 $$ \int_0^T \big| \zeta(\tfrac{1}{2}\!+\!it) \big|^{2k} \ \! dt \ll_{k,\varepsilon} T (\log T)^{k^2+\varepsilon}.$$
 \end{lemma}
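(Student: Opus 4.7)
The plan is to obtain Lemma \ref{L1} as an immediate consequence of Soundararajan's conditional moment bound \cite{S}, which states the displayed inequality verbatim under RH for every fixed $k>0$ and every $\varepsilon>0$. At the level of a formal proof this is essentially a one-line citation; below I sketch the structure of the argument in \cite{S} for context, so that the proof can be made self-contained if desired.

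The starting point is the pointwise upper bound, valid on RH for $T \le t \le 2T$ and $2 \le x \le T^2$,
\[ \log \big|\zeta(\tfrac12+it)\big| \;\le\; \Re \sum_{p \le x} \frac{1}{p^{1/2 + 1/\log x + it}} \frac{\log(x/p)}{\log x} \;+\; \frac{\log T}{\log x} \;+\; O(1), \]
which is deduced from Hadamard's factorization and the assumption that the non-trivial zeros lie on the critical line. The sum on the right is a short Dirichlet polynomial over primes whose mean-square is tractable by standard arguments.

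Using this bound, one estimates the distribution function $\operatorname{meas}\{ t \in [T,2T] : \log|\zeta(\tfrac12+it)| \ge V \}$ by choosing $x = T^{A/V}$ for a suitable constant $A$ (depending on $k$ and $\varepsilon$) and controlling high moments of the resulting Dirichlet polynomial. This produces near-Gaussian decay throughout the relevant range of $V$, and integrating $2k\,e^{2kV}$ against the corresponding tail gives
\[ \int_T^{2T} \big|\zeta(\tfrac12+it)\big|^{2k} \, dt \;\ll_{k,\varepsilon}\; T (\log T)^{k^2 + \varepsilon}. \]
A dyadic decomposition of $[1,T]$ then yields the lemma.

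The main technical step, were one to reproduce the argument from scratch, would be the pointwise bound on $\log|\zeta|$ together with the optimized choice of the parameter $x$ in the distributional estimate; both are the substance of \cite{S}, so the proof I write will simply invoke that paper.
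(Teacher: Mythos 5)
Your proof is correct and matches the paper exactly: the paper also proves Lemma \ref{L1} by a one-line citation to Corollary A of Soundararajan \cite{S}. The sketch you include of Soundararajan's argument is accurate but not needed, since the citation alone suffices.
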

\begin{proof}
This is Corollary A of Soundararajan \cite{S}.
\end{proof}

For $k>0$ and $T\geq 2$, let
$$ J_k(\sigma) = \int_{-\infty}^\infty \big| \zeta(\sigma\!+\!it) \big|^{2k} w_k(t) \ \! dt \quad \text{where} \quad w_k(t) = \int_T^{2T} e^{-2k(t-\tau)^2} d\tau.$$
Then the following estimate holds.
\begin{lemma} \label{L2}
Let $\tfrac{1}{2}\leq \sigma \leq \tfrac{3}{4}, k>0,$ and $T\geq 2$. Then
$$ J_k(\sigma) \ll T^{\sigma-1/2} J_k(\tfrac{1}{2})^{3/2-\sigma}+e^{-kT^2/4}.$$
\end{lemma}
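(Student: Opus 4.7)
The lemma is a three-lines convexity bound for $J_k(\sigma)$ on the strip $\tfrac{1}{2}\leq\sigma\leq\tfrac{3}{2}$. At the right endpoint, boundedness of $\zeta$ on $\mathrm{Re}(s)=\tfrac{3}{2}$ together with $\int_{-\infty}^{\infty}w_k(t)\,dt \asymp T$ gives the trivial estimate $J_k(\tfrac{3}{2})\ll T$, and substituted into the stronger claim
$$J_k(\sigma)\ \ll_{k}\ J_k(\tfrac{1}{2})^{3/2-\sigma}\, J_k(\tfrac{3}{2})^{\sigma-1/2}\,+\,e^{-kT^{2}/4}$$
this reproduces the statement of the lemma. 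So the real content is a (near) log-convexity of $J_k(\sigma)$ interpolating between $\sigma=\tfrac{1}{2}$ and $\sigma=\tfrac{3}{2}$, and this is what I would aim to prove.

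My plan is to exchange the order of integration,
$$J_k(\sigma)\ =\ \int_{T}^{2T} G_\tau(\sigma)\,d\tau, \qquad G_\tau(\sigma) := \int_{-\infty}^{\infty}\bigl|\zeta(\sigma+i(\tau+u))\bigr|^{2k}\,e^{-2ku^{2}}\,du,$$
reducing to a fibrewise convexity bound for $G_\tau(\sigma)$, uniform in $\tau\in[T,2T]$, which I then recombine via H\"{o}lder's inequality with dual exponents $1/(3/2-\sigma)$ and $1/(\sigma-\tfrac{1}{2})$. Because the pole of $\zeta$ at $s=1$ lies inside the strip, I would first replace $\zeta$ by the entire function $\psi(s):=(s-1)\zeta(s)$: on the region where $e^{-2ku^{2}}$ is not already negligible (essentially $|u|\lesssim T/\sqrt{8}$, so $|s-1|\asymp T$) we have $|\psi(s)|^{2k}\asymp T^{2k}|\zeta(s)|^{2k}$, while the complementary tail contributes at most $T^{O_k(1)}e^{-kT^{2}/4}$, absorbed into the stated exponential error. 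For the convexity step itself I would absorb the Gaussian into a holomorphic function: setting $\Phi(z):=\psi(z+i\tau)^{k}\,e^{k z^{2}}$ (for integer $k$), a direct computation gives
$$\int_{-\infty}^{\infty}\bigl|\Phi(\sigma+iu)\bigr|^{2}\,du\ =\ e^{2k\sigma^{2}}\,G_\tau^{\psi}(\sigma),$$
so Hardy's $L^{2}$ three-lines theorem applied to the entire function $\Phi$ on $\tfrac{1}{2}\leq\mathrm{Re}(z)\leq\tfrac{3}{2}$ yields log-convexity of the left-hand side; unwinding the quadratic factor $e^{2k\sigma^{2}}$ costs only a bounded constant depending on $k$, yielding $G_\tau^{\psi}(\sigma)\ll_{k}G_\tau^{\psi}(\tfrac{1}{2})^{3/2-\sigma}G_\tau^{\psi}(\tfrac{3}{2})^{\sigma-1/2}$. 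Reintegrating in $\tau$ with H\"{o}lder, converting back to $\zeta$, and inserting the trivial bound at $\sigma=\tfrac{3}{2}$ complete the proof.

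The main obstacle I anticipate is the fractional-$k$ case: when $k$ is not an integer, $\psi(s)^{k}$ is multivalued and the clean $L^{2}$ argument above fails. In that setting one must replace the explicit $\Phi$ by a Phragm\'{e}n--Lindel\"{o}f argument for the subharmonic function $\log|\psi(s)|^{2k}$, paired with the general ($L^{p}$, $p>0$) form of Hardy's convexity theorem for analytic families, essentially following Heath-Brown~\cite{HB}. The ancillary tasks --- verifying that the tail truncation is benign, that the polynomial factors in $T$ picked up along the way are swallowed by $e^{-kT^{2}/4}$, and that the Hadamard argument extends uniformly across the small neighbourhood of $s=1$ removed by passing to $\psi$ --- are all routine.
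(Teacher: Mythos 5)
The paper does not actually prove this lemma; its ``proof'' is a citation to Lemma~5 of Heath-Brown \cite{HB}, and your proposal is a correct reconstruction of that argument. The ingredients you identify are exactly the ones at work there: unfold $J_k(\sigma)=\int_T^{2T}G_\tau(\sigma)\,d\tau$, absorb the Gaussian into the holomorphic factor $e^{kz^2}$, pass to $\psi(s)=(s-1)\zeta(s)$ to kill the pole at a multiplicative cost $\asymp T^{2k}$ that cancels across the convexity inequality because the exponents $(3/2-\sigma)$ and $(\sigma-1/2)$ sum to $1$, apply the Hardy--Ingham--P\'olya/Gabriel $L^q$-on-vertical-lines convexity to the entire function $\Phi$, and recombine in $\tau$ via H\"older with conjugate exponents $1/(3/2-\sigma)$ and $1/(\sigma-1/2)$. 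The housekeeping you defer does indeed close: the tail $|u|>cT$ contributes $T^{O_k(1)}e^{-c'kT^2}$, and since $3/2-\sigma\geq 3/4$ on the stated range $\tfrac12\leq\sigma\leq\tfrac34$, every cross term carries an exponential factor at least as small as $e^{-3kT^2/8}$, which swallows all polynomial prefactors into $e^{-kT^2/4}$. The fractional-$k$ caveat you raise is also moot for this paper, since Lemma~4.3 is invoked only with $k\in\mathbb{N}$, though Heath-Brown's original does treat all $k>0$ via the subharmonicity route you sketch.
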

\begin{proof}
This is Lemma 5 of Heath-Brown \cite{HB}. 
\end{proof}

\begin{lemma} \label{L3}
 Assume the Riemann hypothesis. Let $T$ be large, $k>0$ be fixed, and $\varepsilon>0$ be arbitrary. Then
\begin{equation}\label{hijk}
 \int_0^T \big| \zeta(\sigma\!+\!it) \big|^{2k} \ \! dt \ll_{k,\varepsilon} T (\log T)^{k^2+\varepsilon}
 \end{equation}
uniformly for  for $\tfrac{1}{2}\leq \sigma \leq \tfrac{3}{4}$.
\end{lemma}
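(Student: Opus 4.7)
The plan is to deduce Lemma \ref{L3} from Lemmas \ref{L1} and \ref{L2} via a convexity-interpolation argument that propagates the bound from the critical line into the strip $\tfrac{1}{2}\leq \sigma \leq \tfrac{3}{4}$.

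First I would estimate the smoothed integral $J_k(\tfrac{1}{2})$ using Lemma \ref{L1}. The weight $w_k(t)=\int_T^{2T} e^{-2k(t-\tau)^2}\,d\tau$ is bounded by $\sqrt{\pi/(2k)}$ for all $t$ and decays like a Gaussian once $t$ leaves an $O(1)$-neighborhood of $[T,2T]$, so
\begin{equation*}
J_k(\tfrac{1}{2}) \ll \int_T^{2T} |\zeta(\tfrac{1}{2}+it)|^{2k}\,dt + O(1) \ll T(\log T)^{k^2+\varepsilon},
\end{equation*}
by Lemma \ref{L1} applied on $[0,2T]$. Feeding this into Lemma \ref{L2}, for $\tfrac{1}{2}\leq \sigma\leq \tfrac{3}{4}$ I obtain
\begin{equation*}
J_k(\sigma) \ll T^{\sigma-1/2}\bigl(T(\log T)^{k^2+\varepsilon}\bigr)^{3/2-\sigma} + e^{-kT^2/4} \ll T(\log T)^{(k^2+\varepsilon)(3/2-\sigma)}.
\end{equation*}
Since $3/2-\sigma\leq 1$ throughout the range, this yields the uniform bound $J_k(\sigma)\ll T(\log T)^{k^2+\varepsilon}$.

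Next I would convert this smoothed bound into a bound on the raw integral over a dyadic block $[T,2T]$. For $t\in[T+1,2T-1]$ the variable $\tau$ can be chosen in an interval of length at least one around $t$, so $w_k(t)\geq e^{-2k}$; hence
\begin{equation*}
\int_{T+1}^{2T-1} |\zeta(\sigma+it)|^{2k}\,dt \leq e^{2k} J_k(\sigma) \ll T(\log T)^{k^2+\varepsilon},
\end{equation*}
and the boundary strips contribute only $O_k(T^\varepsilon)$ by the Lindelöf hypothesis (which follows from RH). A standard dyadic decomposition $\int_0^T = \sum_{j\geq 0}\int_{T/2^{j+1}}^{T/2^j}$, together with the same estimate applied to each dyadic window, then sums as a geometric series to give the claimed bound $\int_0^T |\zeta(\sigma+it)|^{2k}\,dt \ll T(\log T)^{k^2+\varepsilon}$ uniformly in $\sigma\in[\tfrac{1}{2},\tfrac{3}{4}]$.

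The argument is essentially routine once Lemmas \ref{L1} and \ref{L2} are in hand; the only mildly delicate point is ensuring that the Gaussian weight $w_k$ is truly comparable to its indicator on $[T,2T]$, both from above (for the upper bound on $J_k(\tfrac12)$) and from below (to recover the unweighted integral from $J_k(\sigma)$). Neither is a genuine obstacle, so the heart of the proof is simply the observation that the exponent $(3/2-\sigma)(k^2+\varepsilon)$ produced by Heath-Brown's convexity lemma never exceeds $k^2+\varepsilon$ on the interval $[\tfrac{1}{2},\tfrac{3}{4}]$.
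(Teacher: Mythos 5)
Your argument is correct and follows essentially the same route as the paper's: bound $J_k(\tfrac12)$ by Lemma~\ref{L1} via the Gaussian weight, push the bound across the strip using Heath-Brown's convexity estimate (Lemma~\ref{L2}), recover the unweighted integral from $w_k \gg 1$ on (most of) $[T,2T]$, and sum dyadically. The only cosmetic differences are that the tail contribution to $J_k(\tfrac12)$ is really $O(T^\varepsilon)$ rather than $O(1)$ (harmless here, and handled as in the paper via the Lindel\"of bound), and that the paper observes $w_k(t)\gg 1$ on all of $[T,2T]$, making the boundary-strip step unnecessary.
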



\begin{proof}
Note that $w_k(t) \gg 1$ for $T\leq t \leq 2T$ and that $$ w_k(t) \ll \exp\left( -(T^2+t^2)k/18\right)$$ for $t\leq 0$ or $t\geq 3T$. Hence, using the estimate $|\zeta(\sigma+it)| \ll_\varepsilon (|t|+1)^{\varepsilon}$ which (on RH) holds uniformly for $\sigma\geq \tfrac{1}{2}$ and $|t|\geq 1$, 
we observe that
$$ J_k(\sigma) \gg \int_T^{2T} \big| \zeta(\sigma\!+\!it) \big|^{2k} \ \! dt \quad \text{and} \quad J_k(\tfrac{1}{2}) \ll 1+ \int_0^{3T} \big| \zeta(\tfrac{1}{2}\!+\!it) \big|^{2k} \ \! dt.$$
Therefore, for $\tfrac{1}{2}\leq \sigma \leq \tfrac{3}{4}$, Lemma \ref{L1} and Lemma \ref{L2} imply that
$$\int_T^{2T} \big| \zeta(\sigma\!+\!it) \big|^{2k} \ \! dt  \ll T (\log T)^{k^2+\varepsilon}$$
The lemma now follows by summing this estimate over the dyadic intervals $[T,T/2],$ $[T/2, T/4],$ $[T/4, T/8], \ldots $ .
\end{proof}

\noindent{\sc Remarks.}

\begin{enumerate}
\item Assuming RH, for each fixed $\sigma >\tfrac{1}{2}$ and $k\in\mathbb{N}$, it is known that
$$ \int_1^T \big| \zeta(\sigma\!+\!it) \big|^{2k} \ \! dt \sim \big(T\!-\!1\big) \sum_{n=1}^\infty \frac{d_k(n)^2}{n^{2\sigma}}$$
where $d_k(n)$ is the number of ways express an integer $n\geq 1$ as the product of exactly $k$ positive integers. This follows, for instance, from Theorem 13.2 of Titchmarsh \cite{T}. Therefore, the above lemma is only non-trivial if $\sigma=\sigma(T)\rightarrow\tfrac{1}{2}$ as $T\rightarrow \infty$. \\

\item Using the argument at the start of the proof of Theorem \ref{th2}, the proof of the above lemma can be altered to show that (\ref{hijk}) holds uniformly for 
$\frac{1}{2}-\frac{c}{\log T} \leq \sigma \leq \frac{3}{4}$ for any fixed constant $c>0$. In this case, the implied constant would depend on $k,\varepsilon,$ and $c$. 
\end{enumerate}

\begin{lemma} \label{chi_estimate}
Let $k$ be a positive integer. Then
\begin{equation*}
\left| \Big(\frac{d}{dt}\Big)^{\! k} \chi(\tfrac{1}{2}\!-\!it)^{\frac{1}{2}}\right| \ll_{_k} \log^k T
\end{equation*}
uniformly for $|t|\leq T $ and $T\geq 2$. Here $\chi(s)$ is the function defined in \textup{(}\ref{fe}\textup{)}.
\end{lemma}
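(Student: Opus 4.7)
The plan is to estimate the $k$-th derivative by means of Cauchy's integral formula applied to $g(s) := \chi(s)^{1/2}$, regarded as an analytic function on a small complex neighborhood of $s_0 = \tfrac{1}{2} - it$, and then to convert derivatives in $s$ to derivatives in $t$ (which differ only by factors of $-i$).

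First I would check that $g(s)$ admits an analytic branch on a disk of fixed size around any point on the critical line. Writing $\chi(s) = \zeta(s)/\zeta(1-s)$ shows that the zeros of $\chi$ are exactly the non-positive even integers $0, -2, -4, \ldots$ (from the trivial zeros of $\zeta(s)$) and the poles are exactly the positive odd integers $1, 3, 5, \ldots$ (from the trivial zeros of $\zeta(1-s)$). All of these points lie at distance $\geq \tfrac{1}{2}$ from the critical line, so $\chi$ is analytic and nonvanishing on the disk $|s - s_0| \leq 1/3$, and a single-valued analytic branch of $g(s) = \chi(s)^{1/2}$ can be chosen there agreeing with the given value at $s_0$.

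Next I would establish the uniform bound $|g(s)| \ll 1$ on the smaller disk $|s - s_0| \leq r$ with $r = 1/\log T$, uniformly for $|t| \leq T$ and $T \geq 2$. For $|t|$ bounded this follows from continuity of $\chi$ on a compact set. For large $|t|$, Stirling's formula gives the asymptotic
\begin{equation*}
|\chi(\sigma + iu)| \ \asymp \ \bigl(|u|/2\pi\bigr)^{1/2-\sigma}
\end{equation*}
valid uniformly for $\sigma$ in a bounded strip. On our disk one has $|\sigma - \tfrac{1}{2}| \leq r = 1/\log T$ and $|u| \leq T + r$, so
\begin{equation*}
|\chi(s)| \ \ll \ (T/2\pi)^{1/\log T} \ \ll \ 1,
\end{equation*}
and hence $|g(s)| \ll 1$ on the circle $|s - s_0| = r$.

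With this in hand, Cauchy's integral formula on the circle $|s - s_0| = r$ gives
\begin{equation*}
\left|\frac{d^k}{ds^k} g(s_0)\right| \ = \ \left|\frac{k!}{2\pi i} \oint_{|s-s_0|=r}\!\! \frac{g(s)}{(s-s_0)^{k+1}}\ ds\right| \ \ll_k \ \frac{1}{r^k} \ = \ (\log T)^k,
\end{equation*}
uniformly for $|t| \leq T$. Since $(d/dt)^k \chi(\tfrac{1}{2}-it)^{1/2} = (-i)^k g^{(k)}(\tfrac{1}{2}-it)$, the stated bound follows immediately. The only nontrivial step is the uniform bound $|g(s)| \ll 1$ on the shrinking disk, which is the real content of the lemma; once Stirling is invoked to handle this, everything else is routine.
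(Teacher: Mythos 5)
Your proof is correct, but it takes a genuinely different route from the paper's. The paper sketches an induction on $k$ based on the two estimates
\begin{equation*}
\frac{\chi'}{\chi}(\sigma+it) = -\log\frac{|t|}{2\pi} + O\big(|t|^{-1}\big), \qquad \Big(\frac{d}{dt}\Big)^{\!m}\frac{\chi'}{\chi}(\sigma+it) \ll_m |t|^{-1},
\end{equation*}
which follow from Stirling. The inductive picture is: writing $f(t)=\chi(\tfrac12-it)^{1/2}$, one has $f'(t) = -\tfrac{i}{2}f(t)\,\frac{\chi'}{\chi}(\tfrac12-it)$, so the first derivative contributes one factor of $\log T$ via $\chi'/\chi$, while $|f(t)|=1$; by Leibniz, every further derivative either differentiates an existing $\chi'/\chi$ factor (giving $O(|t|^{-1})$, negligible) or falls on the $f$ factor (producing another $\chi'/\chi \ll \log T$). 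This gives $(\log T)^k$ by induction. You instead treat $\chi^{1/2}$ as an analytic function off the critical line, bound it by $O(1)$ on a circle of radius $1/\log T$ via Stirling, and invoke Cauchy's integral formula to get the $(\log T)^k$ immediately with no induction. Both hinge on the same Stirling input; your approach trades the bookkeeping of the Leibniz/induction step for the (routine) verification that a single-valued branch of $\chi^{1/2}$ exists on a fixed-width strip around the critical line and that it stays bounded on the shrinking circles. Your argument is arguably easier to write out in full, while the paper's is closer to the standard toolkit used elsewhere in the paper (the estimates $(\ref{est})$ are reused in the proof of Theorem~\ref{cg}). One small technicality you glossed over: for $2\le T\le e^3$ the radius $r=1/\log T$ exceeds $1/3$, so you should take $r=\min(1/\log T,\,1/3)$; this changes nothing since $|t|\le T$ is then bounded and the compactness argument covers that range.
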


\begin{proof}
This lemma can be established by induction on $k$ using the estimates
\begin{equation} \label{est}
 \frac{\chi'}{\chi}(\sigma\!+\!it) = -\log\frac{|t|}{2\pi} + O\big(|t|^{-1}\big) \quad \text{and} \quad \left| \Big(\frac{d}{dt}\Big)^{\! m} \ \! \frac{\chi'}{\chi}(\sigma\!+\!it)\right| \ll_{_m} |t|^{-1}
 \end{equation}
for $m\in\mathbb{N}$ which, by Stirling's formula, hold uniformly for $1\leq|t|\leq T$ and $-1\leq \sigma \leq 2$. 
\end{proof}

\begin{lemma}\label{cauchy}
 Assume the Riemann hypothesis. Let $k, \ell \in \mathbb{N}$  and let $0<R < \frac{1}{2}$. Then 
\begin{equation*}\label{CE1}
\int_0^T \big|\zeta^{(\ell)}(\tfrac{1}{2}\!+\!it) \big|^{2k} dt \leq \Big(\frac{\ell!}{R^\ell} \Big)^{2k}\cdot \left[ \max_{|\alpha|\leq R} \ \int_0^T \big|\zeta(\tfrac{1}{2}\!+\!\alpha\!+\!it) \big|^{2k} dt \right].
\end{equation*}
\end{lemma}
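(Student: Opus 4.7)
The plan is to apply Cauchy's integral formula for the $\ell$-th derivative to $\zeta$. Since $R<\tfrac{1}{2}$, every point $1/2+it$ lies at distance $\sqrt{1/4+t^2}\geq 1/2>R$ from the pole $s=1$, so $\zeta$ is holomorphic on the closed disk of radius $R$ about $1/2+it$. Parametrising that circle as $w=1/2+it+Re^{i\theta}$, Cauchy's formula gives
\[
\zeta^{(\ell)}(\tfrac{1}{2}+it) \;=\; \frac{\ell!}{2\pi R^{\ell}}\int_{0}^{2\pi}\zeta(\tfrac{1}{2}+Re^{i\theta}+it)\,e^{-i\ell\theta}\,d\theta.
\]

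Next I would take absolute values inside the integral and then raise both sides to the $2k$-th power. Writing the right-hand side as $(\ell!/R^{\ell})$ times the average of $|\zeta|$ over the circle, Jensen's inequality (the convexity of $x\mapsto x^{2k}$) yields
\[
\bigl|\zeta^{(\ell)}(\tfrac{1}{2}+it)\bigr|^{2k}\;\leq\;\Bigl(\frac{\ell!}{R^{\ell}}\Bigr)^{\!2k}\cdot\frac{1}{2\pi}\int_{0}^{2\pi}\bigl|\zeta(\tfrac{1}{2}+Re^{i\theta}+it)\bigr|^{2k}\,d\theta.
\]

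The final step is to integrate in $t$ over $[0,T]$ and interchange the $t$- and $\theta$-integrations (Fubini--Tonelli, valid since the integrand is non-negative). For each fixed $\theta$, set $\alpha=Re^{i\theta}$, so that $|\alpha|=R$; the inner integral is then
\[
\int_{0}^{T}\bigl|\zeta(\tfrac{1}{2}+\alpha+it)\bigr|^{2k}\,dt \;\leq\;\max_{|\alpha|\leq R}\int_{0}^{T}\bigl|\zeta(\tfrac{1}{2}+\alpha+it)\bigr|^{2k}\,dt,
\]
and averaging this uniform bound over $\theta\in[0,2\pi]$ leaves a factor of $1$ in front, producing exactly the claimed inequality.

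There is no real obstacle; the argument is a direct combination of Cauchy's integral formula, Jensen's inequality, and Fubini. The only point to verify carefully is that the disk of radius $R$ avoids the pole of $\zeta$ at $s=1$, which is automatic from the hypothesis $R<\tfrac{1}{2}$. (The Riemann hypothesis plays no role in the lemma itself; it is stated here only because the lemma will be applied downstream together with the RH-dependent estimate of Lemma~\ref{L3}.)
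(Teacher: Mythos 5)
Your proof is correct and follows essentially the same route as the paper: both start from Cauchy's integral formula for $\zeta^{(\ell)}(\tfrac12+it)$ over the circle of radius $R$, then convert the $2k$-th power of the contour integral into a contour integral of $2k$-th powers, and finally swap the order of integration to pull out a $\max$ over $|\alpha|\leq R$. The only cosmetic difference is that you invoke Jensen's inequality for the convex function $x\mapsto x^{2k}$ applied to the averaged parametrisation, whereas the paper applies H\"older's inequality with exponents $\tfrac{2k}{2k-1}$ and $2k$ directly to the integrand $\zeta(\tfrac12+\alpha+it)/\alpha^{\ell+1}$; since $|\alpha|=R$ is constant on the contour, the two computations yield identical constants and are effectively the same estimate. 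Your remark that RH is not actually used in the lemma is also accurate — the disk of radius $R<\tfrac12$ about $\tfrac12+it$ stays at distance at least $\tfrac12-R$ from the pole at $s=1$, so Cauchy's formula applies unconditionally.
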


\begin{proof}
By Cauchy's integral formula,
$$\int_0^T \big|\zeta^{(\ell)}(\tfrac{1}{2}\!+\!it) \big|^{2k} dt = \Big(\frac{\ell!}{2\pi} \Big)^{2k} \cdot \int_0^T \left| \int_{\mathscr{C}_R} \frac{\zeta(\tfrac{1}{2}\!+\!\alpha\!+\! it )}{\alpha^{\ell+1}} \ \! d\alpha \right|^{2k} dt $$
where $\mathscr{C}_R$ denotes the positively oriented circle in the complex plane centered at $0$ of radius $R$. Applying H\"{o}lder's inequality to the integral over $\alpha$ on the left-hand side of the above equation, it follows that
\begin{equation*}
\begin{split}
\int_0^T \big|\zeta^{(\ell)}(\tfrac{1}{2}\!+\!it) \big|^{2k} dt &\leq \Big(\frac{\ell!}{2\pi} \Big)^{2k} \cdot \int_0^T  \left\{ \int_{\mathscr{C}_R} |\alpha|^{\frac{-2k (\ell+1)}{2k-1}} d\alpha \right\}^{2k-1} \cdot \left\{   \int_{\mathscr{C}_R} \big|\zeta(\tfrac{1}{2}\!+\!\alpha\!+\! it )\big|^{2k} \ \! d\alpha  \right\}  \ \! dt
\\
& \leq \Big(\frac{\ell!}{2\pi} \Big)^{2k} \cdot \int_0^T   \frac{(2\pi R)^{2k-1}}{R^{2k(\ell+1)}} \cdot \left\{   \int_{\mathscr{C}_R} \big|\zeta(\tfrac{1}{2}\!+\!\alpha\!+\! it )\big|^{2k} \ \! d\alpha  \right\}  \ \! dt
\\
&\leq  \frac{(\ell!)^{2k}}{2\pi} \frac{1}{R^{2k\ell+1}} \int_0^T  \int_{\mathscr{C}_R} \big|\zeta(\tfrac{1}{2}\!+\!\alpha\!+\! it )\big|^{2k} \ \! d\alpha \ \! dt 
\\
&\leq  \frac{(\ell!)^{2k}}{2\pi} \frac{1}{R^{2k\ell+1}}  \int_{\mathscr{C}_R} \int_0^T  \big|\zeta(\tfrac{1}{2}\!+\!\alpha\!+\! it )\big|^{2k} \ \! dt  \ \! d\alpha 
\\
&\leq \Big(\frac{\ell!}{R^\ell} \Big)^{2k}  \cdot \max_{|\alpha|\leq R} \ \int_0^T \big|\zeta(\tfrac{1}{2}\!+\!\alpha\!+\!it) \big|^{2k} dt
\end{split}
\end{equation*}
as claimed.
\end{proof}


\section{Proof of Theorem \ref{cg}}

If $k$ is 1 or 2, then the integral appearing in Theorem \ref{cg} can be estimated asymptotically using standard methods.  Hence, we will assume throughout the proof that $k\geq 2$ is an integer (so that $k-1$ is also a positive integer). Also, we let $\varepsilon>0$  be an arbitrarily small positive constant which may not be the same at each occurrence. Throughout the proof, implied constants are allowed to depend on $k$ and $\varepsilon$ but are otherwise absolute.  

Following Conrey and Ghosh \cite{CG}, we set 
\begin{equation}\label{Z_1}
Z_1(s)=\zeta'(s)-\frac{1}{2} \frac{\chi'}{\chi}(s) \zeta(s).
\end{equation}
Using (\ref{fe}) and (\ref{DefZ}), one can show that $ |Z'(t)| =  \big| Z_1(\tfrac{1}{2}\!+\!it)\big| $ for every real number $t$; this is assertion (i) of the main lemma in \cite{CG}. Hence
\begin{equation}\label{6:1}
 \int_0^T Z'(t)^2 Z(t)^{2k-2} \ \! dt \ = \ \! \int_0^T \big| Z_1(\tfrac{1}{2}\!+\!it) \zeta(\tfrac{1}{2}\!+\!it)^{k-1} \big|^2 \ \! dt.
\end{equation}
Now let $A(t)$ be a complex valued function which is regular for real $t$. Then, by the Cauchy-Schwarz inequality and (\ref{6:1}), it follows that
\begin{equation}\label{6:2}
\left| \int_0^T  Z_1(\tfrac{1}{2}\!+\!it) \zeta(\tfrac{1}{2}\!+\!it)^{k-1} \overline{A(t)} \ \! dt\right|^2  \leq  \left[ \int_0^T Z'(t)^2 Z(t)^{2k-2} \ \! dt \right] \cdot \left[ \int_0^T \big| A(t) \big|^2 \ \! dt\right].
\end{equation}
We prove Theorem \ref{cg} by obtaining an upper bound for the integral appearing on the right-hand side of the above inequality and a lower bound for the integral appearing on the left-hand side for a specific choice of test function $A(t)$. 

Let $\xi=T^\vartheta$ for a fixed value of $\vartheta$ satisfying $0<\vartheta <1$ and set $A(t)=\mathcal{A}(\frac{1}{2}\!+\!it)$ where
$$ \mathcal{A}(s) = \sum_{n\leq \xi} \frac{d_k(n)}{n^{s}}$$
and $d_k(n)$ is the number of ways express a natural number $n$ as the product of exactly $k$ positive integers. It turns out that the value of $\vartheta$, which we choose at the end of the proof, is important in our argument. Since
$$ \zeta(s)^k = \Big(\sum_{n=1}^\infty \frac{1}{n^s} \Big)^k = \sum_{n=1}^\infty \frac{d_k(n)}{n^s} $$
for $\Re s>1$, we see that $\mathcal{A}(s)$ is an approximation to the $k$th power of the zeta-function. It is known that for each natural number $k$ there exists a constant $C_k$ such that
\begin{equation}\label{tau_k est2}
 \sum_{n\leq \xi} \frac{d_k(n)^2}{n} \ \! = \ \! C_k (\log \xi)^{k^2} + O\Big( (\log \xi )^{k^2-1} \Big)
 \end{equation}
and that
\begin{equation}\label{tau_k est}
\sum_{n\leq \xi} d_k(n)^2  \ \! \ll \ \!  \xi (\log \xi)^{k^2-1}.
\end{equation}
Using these estimates along with Montgomery and Vaughan's mean-value theorem for Dirichlet polynomials (Corollary 3 of \cite{MV}), we see that
\begin{equation}\label{6:3}
\begin{split}
  \int_0^T \big| A(t) \big|^2 \ \! dt  \ \! = \int_0^T \big| \mathcal{A}(\tfrac{1}{2}\!+\! it) \big|^2 \ \! dt\ & = \ \sum_{n\leq \xi} \frac{d_k(n)^2}{n} \big(T + O(n) \big) 
  \\
  & = \ \! C_k \ \! T (\log \xi)^{k^2} + O\Big( T (\log \xi)^{k^2-1} \Big)
 \end{split}
  \end{equation}
  for $2\leq \xi \leq T.$ Thus, it remains to estimate the integral
\begin{equation}\label{6:5}
\begin{split}
 \int_0^T  Z_1(\tfrac{1}{2}\!+\!it) \zeta(\tfrac{1}{2}\!+\!it)^{k-1} \overline{A(t)} \ \! dt &= \frac{1}{i} \int_{1/2}^{1/2+iT} Z_1(s) \zeta(s)^{k-1} \mathcal{A}(1\!-\!s) \ \! ds
 \\
 &= \frac{1}{i} \int_{1/2+i}^{1/2+iT} Z_1(s) \zeta(s)^{k-1} \mathcal{A}(1\!-\!s) \ \! ds + O(1).
\end{split}
\end{equation}

Assuming the Riemann hypothesis, standard estimates for $\zeta(s)$, $\zeta'(s)$, $\chi'(s)/\chi(s)$, and $d_k(n)$ imply that
$$ \big| Z_1(s) \zeta(s)^{k-1} \mathcal{A}(1\!-\!s) \big|  = O\big( T^\varepsilon \xi^{\sigma} \big)$$
uniformly for $\frac{1}{2}\leq \sigma\leq \frac{3}{2}$ and $1\leq t \leq T.$ 
Therefore, by Cauchy's Theorem and (\ref{6:5}), it follows that
\begin{equation*}
\begin{split}
 \int_0^T  Z_1(\tfrac{1}{2}\!+\!it) \zeta(\tfrac{1}{2}\!+\!it)^{k-1} \overline{A(t)} \ \! dt \ &= \ \! \frac{1}{i} \int_{a+i}^{a+iT} Z_1(s) \zeta(s)^{k-1} \mathcal{A}(1\!-\!s) \ \! ds + O\big(T^\varepsilon \xi \big)
 \\
 &= \ \! J_1+J_2+O\big(T^\varepsilon \xi \big),
 \end{split}
 \end{equation*}
with $a=1+(\log T)^{-1}$ and $T\geq 10$, where by (\ref{Z_1}) we have
 $$   J_1 = \frac{1}{i} \int_{a+i}^{a+iT} \zeta'(s) \zeta(s)^{k-1} \mathcal{A}(1\!-\!s) \ \! ds$$
and
 $$     J_2 =  - \frac{1}{2i} \int_{a+i}^{a+iT} \frac{\chi'}{\chi}(s) \zeta(s)^{k} \mathcal{A}(1\!-\!s) \ \! ds.$$
We will show that  $J_1$ is asymptotically negative and that $J_2$ is asymptotically positive. 

In order to estimate $J_1$ and $J_2$, we replace $\zeta'(s)$ and $\zeta(s)$ by their corresponding Dirichlet series. Since these series converge absolutely, we can interchange the sums and the integrals, and then integrate term-by-term. We write
$$ \zeta'(s)\zeta(s)^{k-1}=-\sum_{n=1}^\infty \frac{\tilde{d}_k(n)}{n^s} $$ 
for $\Re s >1$ where 
\begin{equation}\label{tilde}
\tilde{d}_k(n) = \sum_{d|n} d_{k-1}(d) \log\frac{n}{d} \leq d_k(n)  \log n.
 \end{equation}
Then
\begin{equation*}
\begin{split}
J_1 \ \! &= - \sum_{m=1}^\infty \tilde{d}_k(m) \sum_{n\leq \xi} \frac{d_k(n)}{n} \frac{1}{i}  \int_{a+i}^{a+iT} \Big(\frac{m}{n}\Big)^{-s} ds 
\\
&=  - T \ \! \sum_{n\leq \xi} \frac{\tilde{d}_k(n)d_k(n)}{n} + O\Bigg( \sum_{\substack{m\neq n \\ n\leq \xi}} \frac{\tilde{d}_k(m)d_k(n)}{|\log\frac{m}{n}| \ \! m^{a}n^{1-a}} \Bigg).
\end{split}
\end{equation*}
Since
$$ \sum_{m=1}^\infty \frac{d_k(m)^2 \log^2 m}{m^a} \ll (\log T)^{k^2+2},$$
by (\ref{tau_k est}) and (\ref{tilde}) the error term in our estimate for $J_1$ is
\begin{equation*}
\begin{split}
&\ll \ \sum_{m=1}^\infty \frac{d_k(m)^2 \log^2 m}{m^a} \sum_{\substack{n\leq \xi \\ n\neq m}} \frac{1}{|\log\frac{m}{n}|} \ + \ \sum_{n\leq \xi} d_k(n)^2 \sum_{\substack{ m =1 \\ m \neq n}}^\infty \frac{m^{-a}}{|\log\frac{m}{n}|} 
\\
&\ll \ \sum_{m=1}^\infty \frac{d_k(m)^2 \log^2 m}{m^a} \xi \log T \ + \ \sum_{n\leq \xi} d_k(n)^2 \log^2 T
\\
&\ll \ \xi (\log T)^{k^2+3} \ + \ \xi (\log T)^{k^2+1}
\\
&\ll \ T^\varepsilon \xi.
\end{split}
\end{equation*}
Here we have used the inequality $|ab| \leq |a|^2 + |b|^2$ which is valid for $a, b \in \mathbb{C}$.
Hence
$$ J_1 \ = \ - T \ \! \sum_{n\leq \xi} \frac{\tilde{d}_k(n)d_k(n)}{n} + O\big(T^\varepsilon \xi \big).$$

A similar calculation shows that
$$ \frac{1}{i} \int_{a+i}^{a+iT} \zeta(s)^{k} \mathcal{A}(1\!-\!s) \ \! ds  \ = \  T \ \! \sum_{n\leq \xi} \frac{d_k(n)^2}{n} + O\big(T^\varepsilon \xi \big).$$
This expression, (\ref{est}), and (\ref{tau_k est2}) together imply (after an integration by parts) that
$$J_2 \ = \  \frac{T}{2} \log T \ \! \sum_{n\leq \xi} \frac{d_k(n)^2}{n} + O\big(T(\log \xi)^{k^2}\big) + O\big(T^\varepsilon \xi \big).$$
Thus, choosing $\vartheta=\frac{1}{4}$, $\varepsilon=\frac{1}{4}$, and using (\ref{tau_k est2}) and (\ref{tilde}) we see that
\begin{equation}\label{6:4}
\begin{split}
 \int_0^T  Z_1(\tfrac{1}{2}\!+\!it) \zeta(\tfrac{1}{2}\!+\!it)^{k-1} \overline{A(t)} \ \! dt \ &=  \ \! J_1+J_2+O\big(T^\varepsilon \xi \big)
 \\
 &= - T \ \! \sum_{n\leq \xi} \frac{\tilde{d}_k(n)d_k(n)}{n} +  \frac{T}{2} \log T \ \! \sum_{n\leq \xi} \frac{d_k(n)^2}{n} 
 \\
 & \quad \quad\quad + O\big(T(\log \xi)^{k^2}\big) +O\big(T^\varepsilon \xi \big)
 \\
 &\geq - T \ \! \sum_{n\leq \xi} \frac{d_k(n)^2 \log n}{n} +  \frac{T}{2} \log T \ \! \sum_{n\leq \xi} \frac{d_k(n)^2}{n} 
 \\
 & \quad \quad\quad + O\big(T(\log \xi)^{k^2}\big) +O\big(T^\varepsilon \xi \big)
 \\
 & \geq - T \log \xi \ \! \sum_{n\leq \xi} \frac{d_k(n)^2}{n} +  \frac{T}{2} \log T \ \! \sum_{n\leq \xi} \frac{d_k(n)^2}{n} 
 \\
 & \quad \quad\quad + O\big(T(\log \xi)^{k^2}\big) +O\big(T^\varepsilon \xi \big)
 \\
 & \geq \frac{C_k}{4} \ \! T \log T  \ \! (\log \xi)^{k^2} + O\big(T(\log \xi)^{k^2}\big).
 \end{split}
 \end{equation}
Combining the estimates in (\ref{6:2}), (\ref{6:3}), and (\ref{6:4}), we have shown that
\begin{equation*}
\begin{split}
 \int_0^T Z'(t)^2 Z(t)^{2k-2} \ \! dt &\geq  \frac{C_k}{16} T (\log T)^2 (\log \xi)^{k^2} + O\big(T(\log T)^2(\log \xi)^{k^2-1}\big)
 \\
 & \geq \frac{C_k}{4^{k^2+2}} T (\log T)^{k^2+2} + O\big(T(\log T)^{k^2+1}\big).
 \end{split}
 \end{equation*}
This completes the proof of Theorem \ref{cg}.


\section{Proof of Theorem \ref{th2}}

We may assume that $\ell > 0$, as otherwise Theorem \ref{th2} corresponds to Soundararajan's  result stated in Lemma \ref{L1}.  For $\alpha \in \mathbb{C}$, we claim that 
\begin{equation}\label{claim} 
\int_0^T \big| \zeta(\tfrac{1}{2}\!+\!\alpha\!+\!it)\big|^{2k} \ll T (\log T)^{k^2+\varepsilon}
\end{equation}
uniformly for $|\alpha| \leq (\log T)^{-1}$. When $\Re \alpha \geq 0$, this claim is a consequence of Lemma \ref{L3}. When $ \Re \alpha < 0$, Stirling's asymptotic formula for the gamma function (Theorem C.1 of \cite{MV}) 
can be used to show that
\begin{equation*}
\big| \chi(\sigma\!+\!it) \big| = \left(\frac{|t|}{2\pi}\right)^{1/2-\sigma}\Big(1+O\big(|t|^{-1}\big)\Big)
\end{equation*}
uniformly for $-1\leq \sigma\leq 2$ and $|t|\geq 1$ where $\chi(s)$ is defined in (\ref{fe}). Hence
\begin{equation*}
\begin{split}
 \big|\zeta(\sigma\!+\!it)\big| &= \big|\chi(\sigma\!+\!it) \zeta(1\!-\!\sigma\!-\!it) \big| =  \big|\chi(\sigma\!+\!it)| \ \! | \zeta(1\!-\!\sigma\!+\!it) \big| \leq C \big| \zeta(1\!-\!\sigma\!+\!it)  \big|
\end{split}
\end{equation*}
for some absolute constant $C>0$ when $|\sigma-\tfrac{1}{2} |\leq (\log T)^{-1}$ and $0\leq t \leq T$.  Therefore, by another application of Lemma \ref{L3}, it follows that
\begin{equation*}\label{obsv3}
\int_{0}^{T} \big|\zeta(\tfrac{1}{2}\!+\!\alpha\!+\!it)\big|^{2k}  dt  \leq C^{2k}\cdot \int_{0}^{T} \big|\zeta(\tfrac{1}{2}\!-\!\bar{\alpha}\!+\!it)\big|^{2k}  dt \ll T (\log T)^{k^2+\varepsilon}
 \end{equation*}
for $- (\log T)^{-1} \leq \Re \alpha<0$. This proves the claim.

Now let $\ell \geq 1,$ let $k$ be a natural number, and let $R=(\log T)^{-1}.$ Then Lemma \ref{cauchy} and (\ref{claim}) imply that 
\begin{equation}\label{deriv1}
 \int_0^T \big|\zeta^{(\ell)}(\tfrac{1}{2}\!+\!it) \big|^{2k} dt \ \! \ll_{_{k,\ell,\varepsilon}} \ \! T (\log T)^{k(k+2\ell)+\varepsilon}.
 \end{equation}
This proves the first assertion of Theorem \ref{th2}. 

To prove the second assertion, we relate the $\ell$-th derivative of $Z(t)$ for $|t|\leq T$ to a certain linear combination of derivatives of $\zeta(\tfrac{1}{2}\!+\! it)$ and powers of $\log T$, and then use (\ref{deriv1}). Recalling that $|\chi(\tfrac{1}{2}\!+\!it)|=|\chi(\tfrac{1}{2}\!-\!it)|=1$ for real $t$, it follows from (\ref{DefZ}), the Leibniz rule for differentiation, and Lemma \ref{chi_estimate}  that 
\begin{equation*}
\begin{split}
 Z^{(\ell)}(t)  &= \sum_{m=0}^\ell \binom{\ell}{m} \ \!  \Big(\frac{d}{dt}\Big)^m\! \zeta(\tfrac{1}{2}\!+\!it) \ \! \Big(\frac{d}{dt}\Big)^{\ell-m} \!\chi(\tfrac{1}{2}\!-\!it)^{1/2} \ll_{_\ell}  \sum_{m=0}^\ell \big|\zeta^{(m)}(\tfrac{1}{2}\!+\!it)\big| (\log T)^{\ell-m}
 \end{split}
 \end{equation*}
for $|t| \leq T$ when $T$ is sufficiently large.  Thus,
$$ \int_0^T Z^{(\ell)}(t)^{2k} \ \! dt \ \! \ll_{_\ell} \ \!  \sum_{m=0}^\ell \int_0^T \big|\zeta^{(m)}(\tfrac{1}{2}\!+\!it)\big|^{2k} (\log T)^{2k(\ell-m)} \ \! dt \ \! \ll_{_{k,\ell,\varepsilon}} \ \! T (\log T)^{k(k+2\ell)+\varepsilon},$$
as claimed. This completes the proof of Theorem \ref{th2}.


\bigskip

\bigskip

\noindent{\it Acknowledgements.} The author would like to thank Professor D. R. Heath-Brown for some helpful comments which greatly simplified the proof of Theorem \ref{th2}. The author would also like to thank the anonymous referee for a number of useful suggestions.


\bigskip



\begin{thebibliography}{99}


\bibitem{C} J. B. Conrey, `The fourth moment of derivatives of the Riemann zeta-function', \emph{Quart. J. Math. Oxford Ser}. (2) 39 (1988), no. 153, 21--36. 




\bibitem{CG} J. B. Conrey and A. Ghosh, `A mean value theorem for the Riemann zeta-function at its relative extrema on the critical
line', \emph{J. London Math. Soc.} (2) 32 (1985), no. 2, 193--202.









\bibitem{H3} R. R. Hall, `The behaviour of the Riemann zeta-function on the critical line', \emph{Mathematika} 46 (1999), 282--313.

\bibitem{H1} R. R. Hall, `On the extreme values of the Riemann zeta-function between its zeros on the critical line',  \emph{J. Reine Angew. Math.}  560 (2003), 29--41. 

\bibitem{H2} R. R. Hall, `On the stationary points of Hardy's function $Z(t)$', \emph{Acta Arith.}  111  (2004),  no. 2, 125--140. 

\bibitem{H4} R. R. Hall, `Extreme values of the Riemann zeta-function on short zero intervals', \emph{Acta Arith.} 121 (2006), no. 3, 259--273.


\bibitem{HB} D. R. Heath-Brown, `Fractional moments of the Riemann zeta-function', \emph{J. London Math. Soc.} 24 (1981), 65--78.



\bibitem{L} J. Y. Liu, `A mean value theorem for the Riemann zeta-function', (Chinese) \emph{Shandong Daxue Xuebao Ziran Kexue Ban} 29 (1994), no. 3, 273--279.


\bibitem{MN} M. B. Milinovich and N. Ng, `Lower bounds for moments of $\zeta'(\rho)$', preprint. 

\bibitem{MV} H. L. Montgomery and R. C. Vaughan, `Hilbert's inequality', \emph{J. London Math. Soc.} (2)  8 (1974), 73--82.

\bibitem{MV2} H. L. Montgomery and R. C. Vaughan, \emph{Multiplicative Number Theory I. Classical Theory}, Cambridge Studies in Advanced Mathematics 97 (Cambridge University Press, Cambridge, 2007). xvii+552 pp. ISBN:
978-0-521-84903-6; 0-521-84903-9.

\bibitem{RS}  Z. Rudnick and K. Soundararajan, `Lower bounds for moments of L-functions',  \emph{Proc. Natl. Sci. Acad. USA} 102 (2005), 6837--6838.

\bibitem{RS2} Z. Rudnick and K. Soundararajan, \emph{Lower bounds for moments of L-functions: symplectic and orthogonal examples}, in `Multiple Dirichlet Series, Automorphic Forms, and Analytic Number Theory',  Friedberg, Bump, Goldfeld, and Hoffstein, eds., (Proc. Symp. Pure Math., vol. 75, Amer. Math. Soc., 2006).

\bibitem{SS} R. \v{S}le\v{z}evi\v{c}iene and J. Steuding, `Zeros and extreme values of the Riemann zeta-function on the critical line', \emph{Fizikos ir matematikos fakulteto Seminaro darbai, \v{S}iauliu universitetas} 7 (2004), 82--102.

\bibitem{S} K. Soundararajan, `Moments of the Riemann zeta function', \emph{Ann. of Math.} (2)  170 (2009), no. 2, 981--993. 

\bibitem{St} J. Steuding, `Simple zeros and extreme values of the Riemann zeta-function on the critical line with respect to the zero-spacing', \emph{J. Ramanujan Math. Soc.} 22 (2007), no. 1, 15--33.


\bibitem{T} E. C. Titchmarsh, \emph{The Theory of the Riemann Zeta-function}, 2nd edition (Clarendon, Oxford, 1986) revised by D. R. Heath-Brown.

\bibitem{Y} C. Y. Yildirim, \emph{The mean value of $\vert \zeta(\frac12+it)\vert ^2$ at the zeros of $Z^{(k)}(t)$},  C. R. Math. Rep. Acad. Sci. Canada  {\bf 12}  (1990),  no. 4, 135--140.
\end{thebibliography}
\end{document}